\newtheorem{thm}{Theorem}[section]
\newtheorem{lem}[thm]{Lemma}
\theoremstyle{definition}
\newtheorem{df}[thm]{Definition}
\newtheorem*{rem*}{Remark}
\newtheorem{prop}[thm]{Proposition}
\numberwithin{equation}{section}
\def\R{\mathbb{R}}
\def\Ri{\mathbb{R}\cup \{+\infty\}}
\def\pa{\partial }
\def\dom{\mathrm{dom}\,}
\def\dist{\mathrm{dist}\,}
\def\e{\varepsilon}
\def\eps{\varepsilon}
\def\ol{\overline}
\def\ba{\begin{array}}
\def\ea{\end{array}}
\def\be{\begin{equation}}
\def\ee{\end{equation}}
\title{A primal approach to the Clarke-Ledyaev inequality\footnote{The research of M. Hamamdjiev and M. Ivanov  is supported by the European Union-NextGenerationEU, through the National Recovery and Resilience Plan of the Republic of Bulgaria,  project  SUMMIT BG-RRP-2.004-0008-C01. The research of N. Zlateva is supported by the Scientific Fund of Sofia University under grant  80-10-138/17.04.2024.}}
\author{Mihail Hamamdjiev\thanks{Sofia University "St. Kliment Ohridski", Faculty of Mathematics and Informatics, 5 James Bourchier Blvd., 1164 Sofia, Bulgaria, e-mail:mihailh@fmi.uni-sofia.bg}, Milen Ivanov\thanks{Radiant Life Technologies Ltd., Nicosia, Cyprus, e-mail:milen@radiant-life-technologies.com}, Nadia Zlateva\thanks{Sofia University "St. Kliment Ohridski", Faculty of Mathematics and Informatics, 5 James Bourchier Blvd., 1164 Sofia, Bulgaria, e-mail:zlateva@fmi.uni-sofia.bg}}
\date{December, 2024}
\begin{document}

\maketitle

\begin{abstract}
  We present a version of the Clarke-Ledyaev inequality that does not involve elements of the dual space. The proof  relies mainly on geometry and on the classical lemma of Bishop and Phelps. In addition, this approach allows us to provide a simplified proof of the Clakre-Ledyaev inequality. The  approach  is primal in the sense that    no dual arguments are used. 
\end{abstract}

\section{Introduction}
    \label{sec:intro}
    The purpose of this work is to present a streamlined and more geometrical proof of Clarke-Ledyaev multidirectional mean value inequality, relating the latter to the classical Bishop-Phelps Lemma.

    In their original work \cite{CL} F. Clarke and Yu. Ledyaev, after having proved their famous inequality for the Clarke subdifferential, used it to establish estimates for the multidirectional Subbotin derivative. As the elements of a subdifferential belong to the dual space,   the Clarke-Ledyaev inequality intrinsically involves dual notions. On the other hand,  Subbotin derivative,
    see \cite[p. 313]{CL}, is a purely primal object, so one can anticipate that its properties  should be derived  via primal concepts.

    Here we reverse the approach used above: first we prove theorems of Rolle's and Lagrange's type  for a multidirectional derivative, see \eqref{eq:d-def} for its definition. It is similar to the Subbotin derivative, but it has a slightly simpler form. From there  we derive the Clarke-Ledyaev inequality. Note that the Rolle's type multidirectional inequality we establish is very precise and it is equivalent to the classical Bishop-Phelps Lemma, see e.g. \cite{bp}. This gives a distinct geometrical flavour to our proof.

    We work in a Banach space $(X,\|\cdot\|)$ with a closed unit ball $B_X$. We denote by $B(x;\varepsilon)$ the closed ball of radius $\varepsilon$ around $x$, that is, $B(x;\varepsilon)=\{y\in X:\ \|y-x\|\le\varepsilon\}=x+\varepsilon B_X$. The open ball of  radius $\eps $ around $x$ is denoted by $B^\circ(x;\varepsilon)$.
    For $a\in X$ and a nonempty  set $A\subset X$,
    $$
        [a,A] := \{a+t(x-a):\ t\in[0,1],\ x\in A\}
    $$
    denotes the multidirectional interval between $a$ and $A$;
    $$
    A\pm a:=\{ x\pm a\colon x\in A\}
    $$
denotes the sum (resp. the difference) of the sets $A$ and $\{a\}$, and
    $$
        A_\varepsilon := A + \varepsilon B_X
    $$
denotes the $\varepsilon$-enlargement of $A$.

For an extended real-valued function $f:X\to \Ri$ and a non-empty set $A\subset X$ we denote by $\inf f(A):=\inf_{a\in A} f(a)$.

For a function $f:X\to \Ri$  and a nonempty set $ A\subset X$ we define the  $A$-\emph{directional derivative of} $f$ \emph{at} $x\in \dom f$ as
    \begin{equation}
        \label{eq:d-def}
        f^-(x;A) := \liminf_{t\downarrow 0} \frac{ \inf f(x+tA)-f(x)}{t}.
    \end{equation}

Using the classical Bishop-Phelps Lemma, eg. cf. \cite[Lemma 1.2]{bp}, we can give a simple proof of the following primal Clarke-Ledyaev inequality. As it is seen from the proof, we can actually obtain a stronger result, but we prefer to keep the differential form of the inequality for the sake of consistency.
\begin{thm}
    \label{thm:primal-cled}
    Let $A\subset X$ be a non-empty, closed, convex and bounded set. Let $f:X\to\Ri$ be a lower semicontinuous function and let  $a\in\dom f$. If for some $r\in\R$
    \begin{equation}
        \label{eq:cled-coincides}
        r \le  \inf  f(A),
    \end{equation}
    then there exists $\bar x\in[a,A]$ such that $f(\bar x) \le \max\{f(a),r\}$, and
    \begin{equation}
        \label{eq:prim-Lagr}
        f^-(\bar x;A-a) \ge r - f(a).
    \end{equation}
\end{thm}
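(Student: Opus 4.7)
The plan is to deduce the statement from the Bishop--Phelps drop lemma applied in the product space $Y:=X\times\R$ to a closed set and a closed convex cone chosen so that the maximality condition it produces translates directly into the desired slope bound. Specifically, I would work with the cone
\[ K:=\{s(y-a,\,r-f(a))\in Y:\ s\ge 0,\ y\in A\} \]
and the closed set
\[ C:=\{(x,t)\in[a,A]\times\R:\ f(x)\le t\le\max\{f(a),r\}\}. \]
Since $A$ is closed, convex and bounded, $K$ is a closed convex cone and $D:=\sup_{y\in A}\|y-a\|$ is finite. In the principal case $r>f(a)$, this boundedness makes the linear functional $\phi(x,t):=t$ dominate the norm on $K$, i.e.\ $\phi(k)\ge\alpha\|k\|$ for some $\alpha>0$ and every $k\in K$; this is the Bishop--Phelps property. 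Moreover, $\phi$ is bounded above on $C$ by $\max\{f(a),r\}$, which supplies the second hypothesis of the lemma.

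Starting from $(a,f(a))\in C$, Bishop--Phelps produces a point $(\bar x,\bar t)\in C$ with $(\bar x,\bar t)-(a,f(a))\in K$ and $((\bar x,\bar t)+K)\cap C=\{(\bar x,\bar t)\}$. The first condition gives the parametrization $\bar x=a+s(y_0-a)$, $\bar t=f(a)+s(r-f(a))$ for some $s\in[0,1]$ and $y_0\in A$; in particular $\bar x\in[a,A]$ and $f(\bar x)\le\bar t\le\max\{f(a),r\}$, which is the first conclusion. For the directional inequality, each $y\in A$ and $\tau>0$ produce a point $(\bar x+\tau(y-a),\,\bar t+\tau(r-f(a)))\in(\bar x,\bar t)+K$; for $\tau\in(0,1-s]$, convexity of $A$ keeps the first coordinate in $[a,A]$ and the identity $\bar t+(1-s)(r-f(a))=r$ keeps the second coordinate $\le\max\{f(a),r\}$. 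The maximality therefore forces
\[ f(\bar x+\tau(y-a))>\bar t+\tau(r-f(a))\quad\text{for all }y\in A,\ \tau\in(0,1-s]. \]
Taking the infimum over $y\in A$, dividing by $\tau$, and using $\bar t\ge f(\bar x)$ yields $(\inf f(\bar x+\tau(A-a))-f(\bar x))/\tau\ge r-f(a)$ for all small $\tau>0$; passing to the liminf gives the required bound $f^-(\bar x;A-a)\ge r-f(a)$. This also explains the ``stronger result'' the authors mention: the above inequality is already valid for every $\tau\in(0,1-s]$, without passing to a limit.

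The main technical difficulty is arranging the Bishop--Phelps data so the lemma applies regardless of the sign of $\lambda:=r-f(a)$. For $\lambda<0$ one switches to $\phi(x,t)=-t$ and replaces the upper bound on $t$ in $C$ by the lower bound $t\ge r$, after which the same argument runs through essentially verbatim. The critical case $\lambda=0$, where $K$ is flat in the $t$-coordinate so no linear functional on $Y$ can dominate the norm on $K$, is best handled by approximating with the $\lambda>0$ case or by a direct Rolle-type reduction. In every regime it is precisely the boundedness of $A$ that buys the Bishop--Phelps property for $K$, which is the geometric link to the classical Bishop--Phelps Lemma emphasized in the introduction.
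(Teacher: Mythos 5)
Your overall geometry is sound and close in spirit to the paper's proof (both lift to $X\times\R$ and apply Bishop--Phelps to a cone whose slope encodes $r-f(a)$), and in the \emph{strict} case $r<\inf f(A)$ your argument does close: there $s=1$ would force $\bar x\in A$ and hence $f(\bar x)\ge\inf f(A)>r\ge\bar t$, contradicting $(\bar x,\bar t)\in C$, so $s<1$ and your interval $(0,1-s]$ is nonempty. The genuine gap is the boundary case $r=\inf f(A)$, which the hypothesis $r\le\inf f(A)$ allows. There the Bishop--Phelps maximal point can sit on the ceiling $t=\max\{f(a),r\}$ with $s=1$: the maximality is then caused by the artificial truncation $t\le\max\{f(a),r\}$ of your set $C$, not by the behaviour of $f$, and it yields no information. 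This is not a removable technicality: take $X=\R$, $a=0$, $A=[1,2]$, $f(x)=\min\{x,1\}$, $r=1=\inf f(A)$, so $\lambda=r-f(a)=1>0$. Then $C\cap\bigl((a,f(a))+K\bigr)=\{(s,s):s\in[0,1]\}\cup\{(y,1):y\in[1,2]\}$, and \emph{every} $K$-maximal point of $C$ is of the form $(y,1)$ with $y\in[1,2]$, i.e.\ has $s=1$; at each such point $f^-(y;A-a)=0<1=r-f(a)$, so the conclusion fails at every point your construction can produce (the theorem is saved by $\bar x=0$, which your scheme never selects). The paper avoids this by applying Bishop--Phelps to the \emph{sublevel set} $\{(x,t)\in[\tilde a,\tilde A]:f(x)-rt\le f(a)\}$ of the tilted function rather than to a truncated epigraph, and by a separate two-step reduction for the non-strict case (either the tilted function is minimized at $\tilde a$ itself, or one restarts from an interior point where the inequality becomes strict). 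You would need an analogous reduction; approximating $r$ from below by $r'<\inf f(A)$ does not suffice, since it produces a sequence of points $\bar x_{r'}$ with $f^-(\bar x_{r'};A-a)\ge r'-f(a)$ and no means of passing to the limit.

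Two smaller points. First, your treatment of $\lambda<0$ (flip $\phi$ to $-t$ and replace the ceiling by the floor $t\ge r$) has exactly the same defect: floor points with $s=1$ are spuriously maximal in the boundary case. Second, the case $\lambda=0$ is not actually handled; ``approximating with the $\lambda>0$ case'' runs into the same limit-passage problem as above, and ``a direct Rolle-type reduction'' is precisely the content of the paper's Proposition~\ref{prop:rol-prim}, which still requires the $a\in A$ / non-strict subcases to be argued. Once these boundary cases are supplied, your argument would be a correct variant of the paper's proof, and your remark that the inequality holds for every $\tau\in(0,1-s]$ without a limit does match the ``stronger result'' the authors allude to.
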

Note that if $\dom f \cap A \neq\varnothing $, then \eqref{eq:prim-Lagr} can be rewritten in a nicer form:
$$
    f^-(\bar x;A-a) \ge  \inf f(A)-f(a).
$$

Via a duality argument we can establish the celebrated Clarke-Ledyaev inequality as a corollary to Theorem~\ref{thm:primal-cled}. Before   formulate the Clarke-Ledyaev inequality, we first have to give the notion of a \textit{subdifferential} of a function defined on $X$. By the efforts of several researchers during the years, notably A. Ioffe (see \cite{ioffe-1} for a summary of the state-of-the-art  at that time) and L. Thibault~\cite{Tbook}, it is now apparent that the most efficient way to deal with subdifferentials is to work in an abstract framework in which the subdifferentials are defined as  multivalued functions satisfying a list of basic properties described by a set of axioms, rather than to list all the particular  subdifferentials for which a given statement holds.  According to this, we give the notion of a \textit{feasible subdifferential} of a function defined on $X$ before   formulate the Clarke-Ledyaev inequality.

\begin{df}
    \label{def:subdef}
    Let $(X,\|\cdot\|)$ be a Banach space and let $f:X\to\Ri$ be a lower semicontinuous function. A subdifferential operator $\pa$ applied to $f$ at a point $x$ produces  a multivalued map $\pa f:X\to2^{X^*}$. The subdifferentail $\pa$ is said to be \emph{feasible} on $X$, if it satisfies:

    (P1) $\pa f(x)=\varnothing$ for $x\notin\dom f$;

    (P2) $\pa f(x)=\pa g(x)$ whenever $f\equiv g$ on a neighbourhood of $x$;

    (P3) If $f$ is convex and continuous on a neighbourhood of $x$, then $\pa f(x)$ coincides with the canonical subdifferential in the sense of Convex analysis;

    (P4) If $f$ attains a local minimum at $x\in\dom f$, then $0\in\partial f(x)$;

    (P5) If $g$ is convex and continuous and $f+g$ attains its minimum at $\bar x \in X$, then for any $\varepsilon >0$ there exist $p\in\pa f(x)$ and $q\in\pa g(y)$ such that
    \[
    x,y\in B^\circ(\bar x;\varepsilon), \ |f(x)-f(\bar x)| < \varepsilon, \text{ and } \|p+q\| <\varepsilon;
    \]

(P6) There is a   subdifferentail $\tilde\partial$ on $X\times \R$ satisfying (P1)--(P5) such that if $g:X\times\R\to\Ri$ is such that $g(x,t)=f(x)+\kappa t$ where $\kappa \in\R$, then
    \[
        \tilde\pa g(x,t)\subseteq\pa f(x)\times\{\kappa\},\quad\forall x\in X,\ \forall t\in\R.
    \]
\end{df}
 The  definition above which includes only properties (P1)--(P5)  is essentially that found at \cite[p.~679]{Tbook}. On the same page the author shows how all known subdifferentials satisfy this set of axioms. In this sense, the given definition of a feasible subdifferential is a very general one. In  \cite[Definition~2.1]{ioffe-2}, see also \cite[p.~152]{ioffe-book}, the property (P1) is called \textit{substantiality}, (P2) is \textit{localisability}, (P3) is \textit{contiguity}, and (P4) is \textit{optimality}. The \textit{fuzzy sum rule} (P5) is not considered in \cite{ioffe-2} as a basic property, which is natural, because for many subdifferentials it is tough to prove it, and a special definition \cite[Definition~2.12]{ioffe-2} is devoted to it. Note that the sum rule in \cite[Definition~2.12]{ioffe-2} is stronger than (P5).

It is shown in \cite{HI} that any   subdifferential satisfying (P1)--(P5) of Definition~\ref{def:subdef} satisfies the Clarke-Ledyaev inequality and the interested reader can use the auxiliary function $\varphi_K$ from \cite{HI} (instead of the distance function that appears in the proof of Proposition~\ref{prop:bridge}), to derive the Clarke-Ledyaev inequality from Theorem~\ref{thm:primal-cled}. We, however, prefer to follow the much shorter route of \cite{ACL} and to this end we need  the additional assumption (P6). Nevertheless, it is a very basic property --  namely (S5b) of \cite[Definition~2.1]{ioffe-2} --  a part of \textit{calculability}. It is indeed immediately checked for all subdifferentials that we know of, but it is a bit harder to formalize, because it "goes" outside of the underlying space. Here it is reasonable to heed \cite[Remark 2.3]{ioffe-2}: \textit{"Strictly speaking, in a definition of a subdifferential we have to mention the class of functions and spaces on which it is defined or considered. In many cases however definitions make sense for all functions on all Banach spaces".}

The variant of Clarke-Ledyaev inequality we derive in this work is as follows.
\begin{thm}
    \label{thm:DualCLI}
    Let $(X,\|\cdot\|)$ be a Banach space. Let $\pa$ be a feasible subdifferential on $X$.

    Consider a non-empty, convex, bounded and closed set $A\subset X$ and a point $a\in X$. Let $f:X\to\Ri$ be a lower semicontinuous function such that $a\in\dom f$, and let $r\in\R$ be such that
    \begin{equation}
        \label{eq:cled-cond}
        r < \sup_{\delta>0} \inf f(A_\delta).
    \end{equation}

    Let $\e>0$ be arbitrary. Then there exist $\xi\in[a,A]_{\e}$ such that
    \begin{equation}
        \label{eq:f-xi-local}
        f(\xi) < \max\{f(a),r\} + \varepsilon,
    \end{equation}
    and $p\in\pa f(\xi)$ such that
    \begin{equation}
        \label{eq:cl-p-A}
        \inf p(A)-p(a) > r - f(a).
    \end{equation}
\end{thm}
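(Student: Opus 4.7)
Following the product-space approach of \cite{ACL}, I lift the problem to $Y := X \times \R$ by tilting $f$ with the slope $s := r' - f(a)$, where $r'$ is chosen in the open interval $(r,\, \sup_{\delta>0} \inf f(A_\delta))$ so that $s > r - f(a)$. The tilted function $\tilde f(x, t) := f(x) - st$ then satisfies $\inf \tilde f(\tilde A) > \tilde f(\tilde a)$ for $\tilde A := A \times \{1\}$ and $\tilde a := (a, 0)$. The desired subgradient is produced by combining Ekeland's variational principle, the fuzzy sum rule (P5), and the product-structure property (P6); the slack $s - (r - f(a)) > 0$ coming from \eqref{eq:cled-cond} absorbs the approximation errors.

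\textbf{Construction and application of (P5), (P6).} Equip $Y$ with an equivalent product norm and set $C := [\tilde a, \tilde A]$, a closed bounded convex set whose $X$-projection equals $[a, A]$. Choose a convex continuous penalty $\psi$ on $Y$ that vanishes on $C$ and makes $\tilde f + \psi$ bounded below (e.g., $\psi = M d_C$ plus a coercive correction, with $M$ large). For any preassigned $\varepsilon_1 > 0$, Ekeland's principle furnishes $\hat y \in Y$ such that $[\tilde f + \psi] + \varepsilon_1 \|\cdot - \hat y\|_Y$ attains its global minimum at $\hat y$. The fuzzy sum rule (P5), applied to the decomposition $\tilde f + [\psi + \varepsilon_1\|\cdot - \hat y\|_Y]$ (whose second summand is convex continuous), yields $y_1 = (\xi, \tau_1)$ and $y_2 \in Y$ within $\varepsilon_1$ of $\hat y$, subgradients $(p_1, \kappa_1) \in \tilde\partial \tilde f(y_1)$ and $(p_2, \kappa_2) \in \partial(\psi + \varepsilon_1\|\cdot - \hat y\|_Y)(y_2)$, satisfying $\|(p_1, \kappa_1) + (p_2, \kappa_2)\|_{Y^*} < \varepsilon_1$. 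Since $\tilde f(x, t) = f(x) + (-s)t$, property (P6) forces $p_1 \in \partial f(\xi)$ and $\kappa_1 = -s$.

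\textbf{Geometric extraction and main obstacle.} Because $\psi \equiv 0$ on $C$ and both $\tilde a, (x, 1) \in C$ for each $x \in A$, writing the convex subgradient inequality for $\psi + \varepsilon_1\|\cdot - \hat y\|_Y$ at $y_2$ at these two points and subtracting gives $p_2(x - a) + \kappa_2 \leq O(\varepsilon_1)$, uniformly in $x \in A$. Combined with $p_1 \approx -p_2$ and $\kappa_2 \approx s$, this rearranges to $p_1(x) - p_1(a) \geq s - O(\varepsilon_1)$; for $\varepsilon_1$ small enough we obtain $\inf p_1(A) - p_1(a) > r - f(a)$ as required. The localization $\xi \in [a, A]_\varepsilon$ and \eqref{eq:f-xi-local} follow by choosing $M$ large so that $\hat y \in C_{\varepsilon/2}$, together with Ekeland's estimate and the lower semicontinuity of $f$. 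The main obstacle is the calibration: $\psi$, $M$, $\varepsilon_1$ (and $r'$) must be selected so that $\tilde f + \psi$ is bounded below (despite $f$ being possibly unbounded on $X$), so that $y_2$ stays near $C$ where $\psi(y_2) = O(\varepsilon_1)$ (using the Lipschitz character of $\psi$ near $C$), and so that every error absorbed in $O(\varepsilon_1)$ fits strictly below the slack $s - (r - f(a))$ from \eqref{eq:cled-cond}.
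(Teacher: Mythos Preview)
Your product-space lift matches the paper's, but the route diverges thereafter: the paper first applies Theorem~\ref{thm:primal-cled} (the primal inequality, proved via Bishop--Phelps) to locate a point $(\bar x,\bar t)$ with $\widetilde f^-((\bar x,\bar t);\tilde A_\delta-\tilde a)>0$, and only then invokes the bridge Proposition~\ref{prop:bridge} to pass to a subgradient. You try to bypass the primal step and go straight to Ekeland~$+$~penalty~$+$~(P5). That direct route can be made to work, but the execution you sketch has a genuine gap.

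The step ``writing the convex subgradient inequality \dots\ at these two points and subtracting gives $p_2(x-a)+\kappa_2\le O(\varepsilon_1)$'' is invalid: the subgradient inequality for $g:=\psi+\varepsilon_1\|\cdot-\hat y\|$ at $y_2$ yields, for each test point $z$, an \emph{upper} bound $q(z-y_2)\le g(z)-g(y_2)$, and two upper bounds cannot be subtracted to produce an upper bound for $q((x,1)-\tilde a)$. What you actually need is $q(d)\le 0$ (up to $O(\varepsilon_1)$) for every $d\in\tilde A-\tilde a$, and for this your penalty base $C=[\tilde a,\tilde A]$ is the wrong set: $C$ is not invariant under translation by such $d$ (the second coordinate overshoots $1$), so $d_C(y_2+d)-d_C(y_2)$ is uncontrolled and $Md_C$ gives no sign information on $q(d)$. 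The paper's Proposition~\ref{prop:bridge} avoids exactly this by penalizing the distance to the \emph{cone} $\{\bar x\}+C(0;A)$, which \emph{is} invariant under adding elements of $A$ and hence forces $q(a)\le 0$ for every $q$ in its subdifferential. Switching to the cone, however, costs you both localization to $[a,A]_\varepsilon$ and boundedness below of $\tilde f+\psi$ (recall $f$ is merely lsc, so your ``coercive correction'' would have to be an indicator, which is not continuous and falls outside (P5) as stated). The paper handles both issues by first truncating $f$ to a small ball around a point already known to lie in $[a,A]$ with the right value --- and that point is precisely what the primal Theorem~\ref{thm:primal-cled} supplies.
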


Further on, the work is organized as follows. In the next Section~\ref{sec:primal} we prove Theorem~\ref{thm:primal-cled}. Then in Section~\ref{sec:dual} we establish the needed connection  between the multidirectional derivative and the feasible subdifferential. In Section~\ref{sec:proof} we derive the Clarke-Ledyaev inequality. Finally,  is an Appendix we provide a proof of the Bishop-Phelps Lemma using  primal arguments.

\section{Primal result}
\label{sec:primal}

Note that if $\varnothing \neq A\subset X$ is closed and bounded set, then the interval $[a,A]$ is also closed and bounded.

    Indeed, we can assume without loss of generality that $a=0$ and let the sequence $(t_nx_n)_{n=1}^\infty$, where $t_n\in[0,1]$ and $x_n\in A$, be convergent to $\alpha$. Let $t_{n_k}\to t$ as $k\to\infty$. If $t=0$, then as the sequence $(x_n)_{n=1}^\infty$ is bounded, we have that $\alpha =0$, hence $\alpha \in[0,A]$. If, on the other hand, $t\in (0,1]$, then $x_{n_k} = (t_{n_k}x_{n_k})/t_{n_k} \to \alpha/t$, as $k\to\infty$ and $\alpha/t\in A$, because the latter is closed.

    Another geometrical notion, related to the interval $[a,A]$, is the cone generated by $A$ with apex $a$:
    $$
        C(a;A) := \{a+t(x-a):\ t\ge 0,\ x\in A\}.
    $$
    Obviously, $C(a;A) = \{a\} + C(0;A-a)$.

  Let us recall the classical Bishop-Phelps Lemma, see eg.  \cite[Lemma 1.2]{bp}. By drawing a picture the reader will imediately recognise the geometry within the standard proof of Ekeland Variational Principle, eg. cf. \cite[p.45]{phelps}.
    \begin{lem}[Bishop-Phelps]
        \label{lem:bp}
        Let $X$ be a Banach space and let $A$ be a non-empty, closed, convex and bounded subset of $X$. Let $a\in X\setminus A$.

        Let $M\subset X$ be a closed set such that $a\in M$, and $C(a;A)\cap M$ is bounded.

        Then there exists $\bar x \in M \cap C(a;A)$ such that
        \begin{equation}
            \label{eq:bp-concl}
            M\cap \left(\bar x + C(0;A-a)\right) = \{\bar x\}.
        \end{equation}
    \end{lem}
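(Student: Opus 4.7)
The plan is to find a $\preceq$-maximal element of $M\cap C(a;A)$, where $x\preceq y$ iff $y-x\in K:=C(0;A-a)$. The desired conclusion \eqref{eq:bp-concl} is precisely that $\bar x$ admits no $\preceq$-successor in $M$ other than itself.

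\textbf{Preliminaries.} After translating so that $a=0$, I first verify that $K$ is a closed convex cone which is \emph{pointed}, i.e.\ $K\cap(-K)=\{0\}$; this follows from $A$ being convex and $0\notin A$, as the opposite would place $0$ in $A$ as a convex combination. Closedness of $K$ follows from boundedness of $A$ combined with $\dist(0,A)>0$: in any convergent sequence $t_n z_n\to v$ with $t_n\ge 0$ and $z_n\in A$, the scalars $t_n$ must be bounded (else $\|t_n z_n\|\ge t_n\dist(0,A)\to\infty$), and a routine extraction places $v$ in $K$.

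\textbf{Iterative construction.} I introduce the Minkowski gauge $\gamma(v):=\inf\{t\ge 0:v\in tA\}$ on $K$ ($+\infty$ elsewhere), which is sublinear and lower semicontinuous with $\|v\|/R\le\gamma(v)\le\|v\|/d$ on $K$, where $R:=\sup_{z\in A}\|z\|$ and $d:=\dist(0,A)$. Starting from $x_0:=a=0$, I inductively put $S_n:=M\cap(x_n+K)\subset M\cap C(a;A)$, which is closed and bounded; if $S_n=\{x_n\}$ I set $\bar x:=x_n$ and stop, otherwise I choose $x_{n+1}\in S_n$ so that $\gamma(x_{n+1}-x_n)$ is at least half of the finite supremum $\phi_n:=\sup\{\gamma(y-x_n):y\in S_n\}$.

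\textbf{Main obstacle: Cauchyness.} The crux is to prove $\sum_n\|x_{n+1}-x_n\|<\infty$; by $\|v\|\le R\gamma(v)$ it is enough to bound $\sum_n\gamma(x_{n+1}-x_n)$. In the classical dual argument one uses a continuous linear $\ell\in X^*$ with $\ell(v)\ge c\|v\|$ on $K$, obtained from Hahn-Banach applied to the convex set $A$ not containing $0$; telescoping then gives $\sum\ell(x_{n+1}-x_n)\le$ the $\ell$-diameter of $M\cap C(a;A)$. In the primal route I would replace $\ell$ by the gauge $\gamma$ and exploit the $\preceq$-monotonicity of $\gamma(\cdot-a)$ along the sequence together with the near-maximal choice to squeeze out a summable bound, using that each $y\in S_{n+1}$ admits the decomposition $y-x_n=(y-x_{n+1})+(x_{n+1}-x_n)$ with both summands in $K$. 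This is the step I expect to carry the bulk of the technical work of the appendix, as subadditivity of $\gamma$ has the ``wrong'' sign and some more delicate geometric comparison is required.

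\textbf{Conclusion.} Once $(x_n)$ is Cauchy it converges to some $\bar x$ in the closed set $M\cap C(a;A)$, and the closedness of $K$ gives $\bar x-x_n\in K$, whence $\bar x+K\subset x_n+K$. For any $y\in M\cap(\bar x+K)$ we then have $y\in S_n$ for all $n$, so the near-maximal choice yields $\gamma(y-x_n)\le\phi_n\le 2\gamma(x_{n+1}-x_n)$, which tends to $0$ via $\gamma\le\|\cdot\|/d$ and Cauchyness. Using $\|y-x_n\|\le R\gamma(y-x_n)$ we conclude $y=\lim x_n=\bar x$, yielding \eqref{eq:bp-concl}.
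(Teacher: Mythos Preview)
Your overall strategy---iterate a near-maximal successor in the cone order, prove the sequence is Cauchy, and pass to the limit---is exactly the route the paper takes in its Appendix. The conclusion step you give is in fact slightly cleaner than the paper's: by using the transitivity $\bar x+K\subset x_n+K$ you avoid the $\varepsilon$-enlargement $B_\varepsilon$ the paper introduces to capture the limit's successor inside the cone from $x_n$.

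However, the proposal has a genuine gap precisely where you flag it: the Cauchyness step is not proved, only discussed. Your worry that subadditivity of the gauge $\gamma$ points the wrong way is well founded, and the paper does \emph{not} resolve it via gauges or via Hahn--Banach. Instead it proves the following purely primal ``reverse triangle inequality'' on the cone (Lemma~\ref{lem:app}): for any $v_1,\dots,v_n\in K=C(0;A-a)$,
\[
\Bigl\|\sum_{k=1}^n v_k\Bigr\|\ \ge\ c\sum_{k=1}^n\|v_k\|,\qquad c:=\frac{d}{R}=\frac{\dist(0,A-a)}{\sup_{z\in A-a}\|z\|}.
\]
The proof is a one-line convexity trick: write $v_k=t_k a_k$ with $a_k\in A-a$, set $t=\sum_k t_k$, and observe that $\sum_k v_k = t\cdot\bigl(\sum_k (t_k/t)a_k\bigr)\in t(A-a)$, so $\|\sum v_k\|\ge td$, while $\sum\|v_k\|\le tR$. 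Applied to the telescoping increments $v_k=x_{k+1}-x_k\in K$, the left side equals $\|x_N-a\|$, which is bounded since $x_N\in M\cap C(a;A)$; hence $\sum_k\|x_{k+1}-x_k\|<\infty$ and the sequence is Cauchy. This single inequality is the missing primal substitute for the linear functional $\ell$ you mention, and once you have it your argument goes through verbatim.
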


Lemma \ref{lem:bp} is purely primal in its statement as it involves only notions from the underlying space. However, in the proof provided in \cite{bp} is used  a separation theorem, which of course involves elements from the dual space. To use only primal arguments in our proof, we give an alternative proof of this result in the Appendix.	

    By using the Bishop-Phelps Lemma, we readily get the "Rolle" part of the primal Clarke-Ledyaev inequality.
    \begin{prop}
        \label{prop:rol-prim}
        Let $A\subset X$ be a non-empty, closed, convex and bounded set. Let $f:X\to\Ri$ be a proper lower semicontinuos function.
        If
        \begin{equation}
            \label{eq:rol-prim}
            f(a) \le \inf f(A),
        \end{equation}
        then there exists $\bar x\in[a,A]$ such that $f(\bar x) \le f(a)$, and
        $$
            f^-(\bar x;A-a) \ge 0.
        $$
    \end{prop}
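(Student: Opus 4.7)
The plan is to reduce this Rolle-type statement to the Bishop--Phelps Lemma applied in the product space $X\times\R$, with the epigraph of $f$ as the closed set. The degenerate case $a\in A$ is handled first: from $f(a)\le\inf f(A)\le f(a)$ one deduces $\inf f(A)=f(a)$, and $\bar x=a$ works --- by convexity, $a+t(A-a)\subset A$, so $\inf f(a+t(A-a))\ge\inf f(A)=f(a)$ for $t\in[0,1]$, giving $f^-(a;A-a)\ge 0$ directly.

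Assume henceforth $a\notin A$ and fix a small parameter $\delta>0$. In $X\times\R$ take $\tilde a=(a,f(a))$, $\tilde A=A\times\{f(a)-\delta\}$, and $M=\epi f\cap([a,A]\times[\alpha,\infty))$ for some $\alpha<f(a)-\delta$; the restriction to $[a,A]$ forces $\bar x\in[a,A]$ and the vertical slab ensures boundedness when $f$ is not globally bounded below. One checks $\tilde a\in M$, $\tilde a\notin\tilde A$, and $C(\tilde a;\tilde A)\cap M$ is bounded (any $(x,s)$ there has $s=f(a)-t\delta\ge\alpha$, so $t\le(f(a)-\alpha)/\delta$). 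Applying Bishop--Phelps yields $\bar{\tilde x}=(\bar x,\bar s)\in M\cap C(\tilde a;\tilde A)$ with $M\cap(\bar{\tilde x}+C(0;\tilde A-\tilde a))=\{\bar{\tilde x}\}$. Writing $\bar x=a+\bar t(\bar y-a)$ and $\bar s=f(a)-\bar t\delta$ with $\bar t\in[0,1]$ and $\bar y\in A$, we automatically have $\bar x\in[a,A]$ and $f(\bar x)\le\bar s\le f(a)$. Furthermore the case $\bar t=1$ is excluded, because $\bar x\in A$ would force $f(\bar x)\ge\inf f(A)\ge f(a)>f(a)-\delta=\bar s$, contradicting $\bar s\ge f(\bar x)$.

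For the derivative bound, the uniqueness condition gives, for every $y\in A$ and every sufficiently small $t>0$, $f(\bar x+t(y-a))>\bar s-t\delta$. Taking the infimum over $y\in A$ and dividing by $t$ yields
\[
\frac{\inf f(\bar x+t(A-a))-f(\bar x)}{t}\ge\frac{\bar s-f(\bar x)}{t}-\delta.
\]
If $\bar s>f(\bar x)$, the right side tends to $+\infty$ as $t\downarrow 0$ and we conclude $f^-(\bar x;A-a)=+\infty$. If $\bar s=f(\bar x)$, the bound reads only $\ge-\delta$. The main obstacle is converting this residual $-\delta$ into $0$: since the point $\bar x$ output by Bishop--Phelps generally depends on $\delta$, a naive $\delta\downarrow 0$ passage is not available in an infinite dimensional Banach space. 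Resolving this is the technical heart of the proof --- likely by arranging the construction so that either the Case A alternative $\bar s>f(\bar x)$ is always in effect, or by leveraging the strict inequality in the uniqueness (together with the specific structure of the epigraph near $\bar{\tilde x}$) to directly improve the bound to $\ge 0$.
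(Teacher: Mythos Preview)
Your proposal has a genuine gap, and it is exactly the one you flag yourself: the residual $-\delta$ coming from the tilt in $\tilde A=A\times\{f(a)-\delta\}$ does not go away. Neither of your suggested remedies is likely to succeed. There is no mechanism in the Bishop--Phelps Lemma forcing $\bar s>f(\bar x)$ (indeed, if $f$ is continuous the minimal point of the epigraph will typically sit on the graph, so $\bar s=f(\bar x)$ is the generic outcome), and the uniqueness conclusion gives you only a \emph{strict} inequality $f(\bar x+t(y-a))>\bar s-t\delta$ pointwise, which after taking the infimum over $y$ and the $\liminf$ in $t$ yields nothing better than $\ge -\delta$. Since $\bar x$ varies with $\delta$ and you have no compactness, the limit $\delta\downarrow 0$ is blocked, as you note.

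The paper avoids this entirely by staying in $X$ rather than passing to $X\times\R$. First reduce to the strict case $f(a)<\inf f(A)$ (if equality holds, either $f\ge f(a)$ on all of $[a,A]$ and $\bar x=a$ works trivially, or some $a'\in[a,A]$ has $f(a')<\inf f(A)$ and one applies the strict case to $a'$, using $[a',A]\subset[a,A]$ and $f^-(\bar x;A-a)\ge f^-(\bar x;A-a')$). In the strict case take
\[
M:=\{x\in[a,A]:\ f(x)\le f(a)\},
\]
a closed bounded subset of $X$ containing $a$, and apply the Bishop--Phelps Lemma in $X$ with this $M$ and the cone $C(a;A)$. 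The output $\bar x$ satisfies $f(\bar x)\le f(a)$ and $M\cap(\bar x+C(0;A-a))=\{\bar x\}$; since $f(\bar x)\le f(a)<\inf f(A)$ forces $\bar x\notin A$, a small ball around $\bar x$ intersected with $\bar x+C(0;A-a)$ lies inside $[a,A]$, and on that set $f(x)>f(a)\ge f(\bar x)$ for $x\neq\bar x$. This gives $f^-(\bar x;A-a)\ge 0$ directly, with no $\delta$ to eliminate. The product-space trick you attempted is precisely what the paper uses \emph{afterwards}, in deducing the Lagrange-type Theorem~1.1 from this Rolle-type proposition---but there the extra coordinate carries the slope $r$, not an artificial tilt.
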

    \begin{proof}
        Assume first that
        \begin{equation}
            \label{eq:f-a-less}
            f(a) < \inf f(A),
        \end{equation}
        which implies that $a\in \dom f$ and $a\not \in A$.
        Consider the set
        $$
            M := \{x\in [a,A]:\ f(x) \le f(a)\}.
        $$
        The set $M$  is closed, as the function $f$ is lower semicontinuos and the set  $[a,A]$ is closed. $M$ is bounded as $A$ is bounded, and $M$ is non-empty, as $a\in M$ by definition. By Bishop-Phelps Lemma~\ref{lem:bp} there is $\bar x \in [a,A]$ such that \eqref{eq:bp-concl} holds.
        The latter means that $f(\bar x) \le f(a)$ and thus $f(x) > f(a)$ for all $x\in [a,A]\setminus\{\bar x\}$ such that $x\in \bar x + C(0;A-a)$.

        But \eqref{eq:f-a-less} and $f(\bar x) \le f(a)$ imply that $\bar x \not\in A$ and then it is easy to see that there is $\delta > 0$ such that $B(\bar x;\delta) \cap \left(\bar x + C(0;A-a)\right) \subset [a,A]$. So, $f(\bar x) \le f(x)$ for all $x$ in $B(\bar x;\delta) \cap \left(\bar x + C(0;A-a)\right)$. Directly from the definition of the multidirectional derivative, see \eqref{eq:d-def}, it follows that $f^-(\bar x; A-a) \ge 0$.

        Turning now to the general case, if $f(a)\le \inf f(A)$, then again directly from the definition \eqref{eq:d-def} it follows that $f^-(\bar x; A-a) \ge 0$. Otherwise, there will be some $a'\in[a,A]$ such that $f(a')<\inf f(A)$ and from the first part of the proof there will be  $\bar x \in [a',A]\subset[a,A]$ such that $f(\bar x) \le f(a') < f(a)$, and $f^-(\bar x; A-a') \ge 0$. But by convexity of $A$ it is easy to check that $f^-(\bar x; A-a) \ge f^-(\bar x; A-a')$.
    \end{proof}
    We will use the ingenious construction from \cite{ACL} to move from Rolle type of result to Lagrange type of result  and to prove the primal Clarke-Ledyaev inequality.

    \begin{proof}[\textbf{\emph{Proof of Theorem~\ref{thm:primal-cled}}}] Considering instead of $f$ the function $x\to f(x) - f(a)$, we may and do assume for convenience that $f(a)=0$.

        Consider the Banach space $\widetilde{X}:=X\times\R$, the
        point $\widetilde{a}:= (a,0)$, the
        set $\widetilde{A}:=A\times\{1\}=(A,1)\subset\widetilde{X}$ and  the function $\widetilde{f}:\widetilde{X}\to\Ri$ defined by
        $$
            \widetilde{f}(x,t) :=f(x)-rt,\quad\forall x\in X,\ \forall t\in \R.
        $$
        It is clear that $\widetilde{f}$ is lower semicontinuous. Furthermore,
        $$
         \widetilde{f}(\widetilde a) =   \widetilde{f}(a,0)=f(a) = 0,
        $$
        and
        $$
         \inf   \widetilde{f}(\widetilde{A}) \ge \inf f(A) - r  \ge 0 = \widetilde f(\widetilde{a}).
        $$
        Applying  Proposition~\ref{prop:rol-prim} to $\widetilde f$, $\widetilde A$, and $\widetilde a$ one gets  $(\bar x,\bar t)\in [(a,0),(A,1)]$ such that  $\tilde f (\bar x , \bar t) \le 0$, i.e. $f(\bar x) \le r\bar t \le \max\{0,r\}$, and
        $$
             \widetilde{f}^-((\bar x,\bar t),\widetilde{A}-\widetilde{a})\ge 0.
        $$
        But $\widetilde{f}^-((\bar x,\bar t),\widetilde{A}-\widetilde{a})=\widetilde{f}^-((\bar x,\bar t);(A,1)-(a,0)) =  f^-(\bar x;A-a) - r$.
    \end{proof}

    \section{Duality}
    \label{sec:dual}
    Establishing relations between primal and dual differential notions is a central thread in Analysis and in Variational Analysis. In this section we develop the tools we need to "bridge" the multidirectional derivative and the feasible subdifferential.

   First we recall an equivalent form of \cite[Proposition 2.1]{HI}.
    \begin{lem}\emph{\cite[Proposition 2.1]{HI}}
        \label{lem:p4}
        Let $X$ be a Banach space. Let $f:X\to\Ri$ be a proper and lower semicontinuous function. Let $\pa$ satisfies properties (P1)--(P5) of the definition of a feasible subdifferential on $X$. Let $g:X\to\R$ be a convex continuous function such that $f+g$ be bounded below. Then for each $\varepsilon>0$ there are $x,y\in X$, $p\in\pa f(x)$ and $q\in\pa g(y)$ such that
        $$
            \|x-y\|<\varepsilon, \ f(x)+g(y) < \inf (f+g)(X) + \varepsilon, \text{ and } \|p +q\| < \varepsilon.
        $$
    \end{lem}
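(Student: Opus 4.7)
The plan is to reduce the conclusion to the exact-minimum case handled by axiom (P5) via Ekeland's variational principle. Given $\varepsilon>0$, first pick $x_0\in X$ with $(f+g)(x_0)<\inf(f+g)(X)+\eta$ for a small $\eta>0$ to be chosen later. Since $g$ is continuous and $f$ is lower semicontinuous, $f+g$ is lower semicontinuous and, by hypothesis, bounded below. Apply Ekeland's variational principle at $x_0$ with parameter $\lambda>0$ to obtain $\bar z\in X$ satisfying $\|\bar z-x_0\|\le\lambda$, $(f+g)(\bar z)\le(f+g)(x_0)$, and
\[
f(x)+g(x)+\tfrac{\eta}{\lambda}\|x-\bar z\|\ \ge\ f(\bar z)+g(\bar z), \quad\forall x\in X.
\]

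Set $\tilde g(x):=g(x)+(\eta/\lambda)\|x-\bar z\|$, which is convex and continuous. The above inequality says precisely that $f+\tilde g$ attains its global minimum at $\bar z$, so axiom (P5) applies: for any $\delta>0$ there exist $x,y\in B^\circ(\bar z;\delta)$ with $|f(x)-f(\bar z)|<\delta$, together with $p\in\pa f(x)$ and $\tilde q\in\pa\tilde g(y)$ satisfying $\|p+\tilde q\|<\delta$. Invoking (P3) together with the classical Moreau--Rockafellar sum rule for two convex continuous functions, $\pa\tilde g(y)=\pa g(y)+(\eta/\lambda)\,\pa\|\cdot-\bar z\|(y)$, so we may write $\tilde q=q+w$ with $q\in\pa g(y)$ and $\|w\|\le\eta/\lambda$. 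Hence $\|p+q\|<\delta+\eta/\lambda$ and $\|x-y\|<2\delta$. The continuity of $g$ at $\bar z$ together with $|f(x)-f(\bar z)|<\delta$ and $(f+g)(\bar z)<\inf(f+g)(X)+\eta$ yields
\[
f(x)+g(y)\ <\ \inf(f+g)(X)+\eta+\delta+|g(y)-g(\bar z)|.
\]

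To finish, the parameters $\eta,\lambda,\delta$ must be coordinated so that all three target inequalities hold simultaneously. A workable choice is $\eta=\varepsilon^2/4$ and $\lambda=\varepsilon/2$, which gives $\eta/\lambda=\varepsilon/2$; then take $\delta$ small enough that $2\delta<\varepsilon$, $\delta+\eta/\lambda<\varepsilon$, and, using continuity of $g$ at $\bar z$, $\eta+\delta+|g(y)-g(\bar z)|<\varepsilon$. The argument is conceptually straightforward: Ekeland forces an exact minimum after a controllably small convex perturbation, (P5) then produces the fuzzy sum, and (P3) strips the perturbation off the dual side. The main obstacle is purely bookkeeping, namely arranging that the Ekeland perturbation $\eta/\lambda$ is small on the dual side while $\lambda$ is still large enough to keep $\bar z$ (and hence $x,y$) close to a near-minimizer of $f+g$ on the primal side.
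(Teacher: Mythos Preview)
Your argument is correct: Ekeland's variational principle converts the ``bounded below'' hypothesis into an exact minimum of $f+\tilde g$ with $\tilde g$ convex continuous, (P5) then supplies the fuzzy decomposition, and (P3) together with the Moreau--Rockafellar sum rule lets you peel off the small norm perturbation on the dual side. The parameter bookkeeping is fine (tacitly assuming $\varepsilon<1$, which is harmless).

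There is nothing to compare against here: the paper does not prove this lemma at all but simply cites it as an equivalent form of \cite[Proposition~2.1]{HI}. Your Ekeland-plus-(P5) route is in fact the standard way such results are established, so it is entirely in keeping with what one would expect the cited proof to contain.
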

   Further, we continue with the following simple
    \begin{lem}
        \label{lem:dir-der-ineq}
        Let $A\subset X$ be a non-empty, closed, convex and bounded set. If for some $x\in X$
        $$
            f^-(x;A) > 0,
        $$
        then there exist $k,\overline \varepsilon > 0$ such that
        \begin{equation}
            \label{eq:dir-der-ineq}
            f(y) - f(x) \ge k\|y-x\|,\quad\forall y\in (\{x\}+C(0;A)) \cap B(x;\overline\varepsilon).
        \end{equation}
    \end{lem}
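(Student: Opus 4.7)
The plan is to unpack the $\liminf$ in the definition \eqref{eq:d-def} into a uniform, finite-scale differential inequality, and then translate it into a pointwise bound on $(\{x\}+C(0;A))\cap B(x;\overline\varepsilon)$. Concretely, the hypothesis $f^-(x;A)>0$ yields constants $k'>0$ and $t_0>0$ such that, for every $a\in A$ and every $t\in(0,t_0]$,
$$
f(x+ta)-f(x)\ge k't.
$$

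The first substantive step is to observe that $0\notin A$. Indeed, if $0\in A$, then $\inf f(x+tA)\le f(x)$ for every $t>0$, which forces $f^-(x;A)\le 0$, contradicting the hypothesis. Since $A$ is closed, $0\notin A$ yields $m:=\inf_{a\in A}\|a\|>0$, while boundedness of $A$ gives $R:=\sup_{a\in A}\|a\|<\infty$.

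Next I would set $\overline\varepsilon:=mt_0$ and $k:=k'/R$, and take any $y=x+v$ with $v\in C(0;A)$ and $\|v\|\le\overline\varepsilon$. If $v=0$ the conclusion is trivial. Otherwise write $v=ta$ for some $t>0$ and $a\in A$; then $\|v\|=t\|a\|\ge tm$, so $t\le\|v\|/m\le t_0$, and the finite-scale inequality applies. Using moreover $\|a\|\le R$, one gets
$$
f(y)-f(x)=f(x+ta)-f(x)\ge k't=k'\,\frac{\|v\|}{\|a\|}\ge\frac{k'}{R}\,\|v\|=k\|y-x\|,
$$
which is exactly \eqref{eq:dir-der-ineq}.

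The only place that needs care is the observation that $0\notin A$; without it the parameter $t$ in the representation $v=ta$ could be forced to exceed $t_0$ even for arbitrarily small $v$, and the differential inequality at scale $t_0$ would never be triggered. Everything else is bookkeeping with the constants $m$, $R$, $k'$ and $t_0$.
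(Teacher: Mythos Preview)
Your proof is correct and follows essentially the same route as the paper: pick a level $k'<f^-(x;A)$, extract from the $\liminf$ a scale $t_0$ on which $\inf f(x+tA)\ge f(x)+k't$, note $0\notin A$ to get $m=\inf_{a\in A}\|a\|>0$, and set $\overline\varepsilon=mt_0$, $k=k'/R$. The paper's argument is identical up to notation (it uses $\lambda,\delta,\mu,s$ in place of your $k',t_0,m,R$, and takes $\mu$ strictly below the infimum so as to work with the open interval $(0,\delta)$ rather than your half-closed $(0,t_0]$).
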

    \begin{proof}
        Let $\lambda > 0$ be such that $\lambda < f^-(x;A)$. By the definition of the multidirectional derivative of $f$ at $x$ there is $\delta>0$ such that
        \begin{equation}
            \label{eq:t-lambda}
          \inf  f(x+tA) > f(x) + t\lambda,\quad\forall t\in(0,\delta).
        \end{equation}
        Obviously, this implies that $0\not\in A$. Since $A$ is closed, there is $\mu$ such that
        $$
           0< \mu < \inf_{a\in A} \|a\|.
        $$
        Set $s:=\sup_{a\in A} \| a\|$ and observe that $s$  is finite as $A$ is bounded. Set
        $$
         \overline\varepsilon := \mu\delta,\text { and }k := \lambda/s,
        $$
        and let $y\in (\{x\}+C(0;A)) \cap B(x;\bar{\e})$ be arbitrary. Since $y-x\in C(0;A)$, there are $a\in A$ and $t\ge0$ such that $y-x = ta$. But $\bar{\e} \ge \|y-x\| = t\|a\| > t\mu$. Then $t < \bar\e/\mu = \delta$.

        If $t=0$, that is $y=x$, then \eqref{eq:dir-der-ineq} is trivially satisfied, and if $t>0$ then \eqref{eq:t-lambda} implies $f(y) - f(x) > \lambda t$. But $t = \|y-x\|/\|a\| \ge \|y-x\|/s $ and  \eqref{eq:dir-der-ineq} again holds.
    \end{proof}

    The following key proposition essentially bridges the multidirectional  derivative defined via  \eqref{eq:d-def}, and the feasible subdifferential. Even a stronger inequality  can be derived, but we wish to keep the technicalities down to the bare minimum and thus we prove the following version, which is enough for our purposes.

    \begin{prop}
        \label{prop:bridge}
        Let $X$ be a Banach space. Let $f:X\to\Ri$ be a proper and lower semicontinuous function. Let $\pa$ be a feasible subdifferential on $X$.
        Let $A\subset X$ be a non-empty, convex, bounded and closed set. Assume that for some $\bar x\in\dom f$ and some $\delta>0$
        \begin{equation}
            \label{eq:bridge-cond}
            f^-(\bar x;A_\delta ) > 0.
        \end{equation}
        Then for each $\varepsilon > 0$ there are $x\in B^\circ(\bar x;\varepsilon)$ such that $|f(x) - f(\bar x)| < \varepsilon$, and $p\in\partial f(x)$ with
        \begin{equation}
            \label{eq:bridge-concl}
           \inf p(A ) > - \varepsilon.
        \end{equation}
    \end{prop}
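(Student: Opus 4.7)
The plan is to construct an auxiliary function $F$ having $\bar x$ as an approximate minimizer with deficit as small as we wish, extract an exact nearby minimizer via Ekeland's variational principle, apply the fuzzy sum rule (P5) there, and exploit the elementary geometric fact that $\partial d_{\bar K}(v)\subseteq \bar K^\circ\cap B_{X^*}$ for the closed convex cone $\bar K:=\overline{C(0;A)}$.

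First I would apply Lemma~\ref{lem:dir-der-ineq} to $A_\delta$ (legitimate since the hypothesis reads $f^-(\bar x;A_\delta)>0$), obtaining $k,\bar\varepsilon>0$ with $f(y)\ge f(\bar x)+k\|y-\bar x\|$ on $(\bar x+C(0;A_\delta))\cap B(\bar x,\bar\varepsilon)$. A short geometric computation, based on the fact that the ball of radius $t\delta$ around any point of the form $\bar x+ta$ with $a\in A$ is contained in $\bar x+C(0;A_\delta)$ and on $\|a\|\le s:=\sup_{a\in A}\|a\|$, produces $\beta\in(0,1)$ such that the wedge $\{y:d_{\bar K}(y-\bar x)\le\beta\|y-\bar x\|\}$ lies inside $\bar x+C(0;A_\delta)$ for $y$ near $\bar x$. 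Lower semicontinuity supplies, for every $\gamma'>0$, a radius $\nu(\gamma')>0$ with $f\ge f(\bar x)-\gamma'$ on $B(\bar x,\nu(\gamma'))$. Fix $\eta\in(0,\bar\varepsilon)$, let $\gamma:=\sup\{f(\bar x)-f(y):y\in B(\bar x,\eta)\}<\infty$ (finite by lsc), and for a target $\gamma'>0$ choose $L\ge(\gamma-\gamma')/(\beta\nu(\gamma'))$. Setting $\hat f:=f+I_{B(\bar x,\eta)}$, $g:=Ld_{\bar K}(\cdot-\bar x)$, $F:=\hat f+g$, a three-case split (on the cone; outside the wedge with $\|y-\bar x\|\ge\nu(\gamma')$; outside the wedge with $\|y-\bar x\|<\nu(\gamma')$) then gives $F\ge f(\bar x)-\gamma'$ on $X$ while $F(\bar x)=f(\bar x)$.

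Ekeland's principle applied to $F$ with accuracy $\gamma'$ and strength $\sigma>0$ yields $\tilde y$ with $\|\tilde y-\bar x\|\le\gamma'/\sigma$, $F(\tilde y)\le f(\bar x)$, and $\tilde y$ a global minimizer of $\hat f+(g+\sigma\|\cdot-\tilde y\|)$. Since $g+\sigma\|\cdot-\tilde y\|$ is convex and continuous, axiom (P5) produces, for any $\varepsilon'''>0$, points $x,y\in B^\circ(\tilde y,\varepsilon''')$, $p\in\partial\hat f(x)$, $q\in\partial(g+\sigma\|\cdot-\tilde y\|)(y)$ with $\|p+q\|<\varepsilon'''$ and $|\hat f(x)-\hat f(\tilde y)|<\varepsilon'''$; choosing $\eta>\gamma'/\sigma+\varepsilon'''$ puts $x$ strictly inside $B(\bar x,\eta)$, so (P2) yields $\partial\hat f(x)=\partial f(x)$. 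The convex sum rule decomposes $q=q_g+q_\sigma$ with $q_g\in L\,\partial d_{\bar K}(y-\bar x)$ and $\|q_\sigma\|\le\sigma$; the crucial observation $\partial d_{\bar K}(v)\subseteq\bar K^\circ$ for every $v\in X$ (a consequence of $\bar K+\bar K\subseteq\bar K$, which forces $d_{\bar K}(v+tu)\le d_{\bar K}(v)$ for $u\in\bar K$, $t\ge 0$) gives $\langle q_g,a\rangle\le 0$ for every $a\in A\subseteq\bar K$. Hence
$$
\langle p,a\rangle=-\langle q_g,a\rangle-\langle q_\sigma,a\rangle+\langle p+q,a\rangle\ge -(\sigma+\varepsilon''')s,
$$
and taking $\sigma,\varepsilon'''<\varepsilon/(2s)$ together with $\gamma'$ small enough to secure $\|x-\bar x\|<\varepsilon$ and $|f(x)-f(\bar x)|<\varepsilon$ closes the argument. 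The main obstacle is exactly the bookkeeping, since the lsc-modulus $\nu(\cdot)$ of $f$ at $\bar x$ is uncontrolled by the hypothesis (which only bounds $f$ from below on a cone): this is what forces $L$ to depend on both $\gamma'$ and $f$, and it is precisely why Ekeland is needed---to decouple the localization scale $\gamma'/\sigma$ of $\tilde y$ from the indicator-ball radius $\eta$.
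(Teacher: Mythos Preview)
Your argument is essentially correct and runs parallel to the paper's proof: both restrict $f$ to a small closed ball around $\bar x$, add a multiple of the distance to the cone $\bar x+C(0;A)$, and exploit the elementary fact that every convex subgradient of this distance lies in the polar of the cone (your $\partial d_{\bar K}(v)\subseteq \bar K^\circ$ is exactly the paper's computation that $\inf q(-A)\ge 0$ for $q\in\partial\psi_n$). The difference is in how the localisation is achieved. The paper lets the penalty coefficient $n\to\infty$, applies Lemma~\ref{lem:p4} (which already packages Ekeland together with (P5)) to $\hat f+\psi_n$, and then argues \emph{by contradiction} that the resulting $x_n$ must fall in the open ball for some large $n$, using the cone growth estimate from Lemma~\ref{lem:dir-der-ineq}. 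You instead fix a single penalty level $L$, chosen quantitatively via the wedge bound and the lsc modulus, so that $\bar x$ is a $\gamma'$-approximate minimiser of $F$; Ekeland then produces $\tilde y$ at a controlled distance from $\bar x$, and (P5) finishes. Your route is more constructive and makes the dependence of all constants explicit, at the price of heavier bookkeeping; the paper's limiting argument is shorter but less transparent about which $n$ actually works.

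Two small points deserve tightening. First, ``$\gamma<\infty$ (finite by lsc)'' is not valid for an \emph{arbitrary} $\eta\in(0,\bar\varepsilon)$: lower semicontinuity at $\bar x$ only guarantees that $f$ is bounded below on \emph{some} ball around $\bar x$, so $\eta$ must be taken small enough for this to hold. Second, the phrase ``choosing $\eta>\gamma'/\sigma+\varepsilon'''$'' clashes with your earlier ``Fix $\eta$'': the correct order is to fix $\eta$ (small enough, as above), then $\sigma<\varepsilon/(2s)$, then $\gamma'$ so small that $\gamma'/\sigma<\min(\eta,\varepsilon,\nu(\varepsilon))/2$, and finally $\varepsilon'''$ subject to the same bound. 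Once the quantifiers are straightened out, the argument goes through.
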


    \begin{proof}
        Let $\varepsilon \in (0,1)$ be fixed. Since $f$ is a lower semicontinuous function, there is $\gamma \in  (0,\eps)$ such that
        \begin{equation}
            \label{eq:f-eps-bound}
            f(B(\bar x;\gamma))> f(\bar x) - \varepsilon.
        \end{equation}
        Next, by \eqref{eq:bridge-cond} and Lemma~\ref{lem:dir-der-ineq} it follows that there is some $k>0$ and $\overline \eps$ such that
        \begin{equation*}
                     f(y) - f(\bar x) \ge k\|y- \bar x\|,\quad\forall y\in (\{ \bar x\}+C(0;A_\delta) )\cap B(\bar x;\overline \eps).
        \end{equation*}
        By taking $\gamma$  so small that $\gamma <\overline \eps$, we have that
         \begin{equation}
            \label{eq:f-bound-bound}
            f(y) - f(\bar x) \ge k\|y- \bar x\|,\quad\forall y\in (\{ \bar x\}+C(0;A_\delta) )\cap B(\bar x;\gamma).
        \end{equation}

        Consider the convex  continuous function
        $$
            \psi_n(x) := n.\dist(x, \{ \bar x\}+C(0;A )),
        $$
        where $n\in\mathbb{N}$. We claim that
        \begin{equation}
            \label{eq:q-le-0}
         \inf   q( -A) \ge 0,\quad\forall x\in X,\ \forall q\in \pa \psi_n(x).
        \end{equation}
        Indeed, fix arbitrary   $q\in \pa \psi_n(x)$ and $a\in A$. Due to (P3) we have $q(a ) \le \psi_n(x + a) - \psi_n(x)$. But if $y\in \{ \bar x\}+C(0;A )$, say $y=\bar x + tb$ for some $b\in A$ and $t\ge 0$, is such that $\|x-y\| < \dist(x, \{ \bar x\}+C(0;A )) + \delta$ for some $\delta >0$, then
        $$
            y+a = \bar x + tb + a = \bar x + (1+t)\left[\left(\frac{1}{1+t}a+\frac{t}{1+t}b\right)\right]\in  \{ \bar x\}+C(0;A ),
        $$
        by convexity of $A$. Therefore, $\dist(x+a , \{ \bar x\}+C(0;A )) \le \| x+a-(y+a)\|=\|x-y\| < \dist(x, \{ \bar x\}+C(0;A )) + \delta$, and since $\delta>0$ is arbitrary, we have $\psi_n(x + a) - \psi_n(x) \le 0$. Hence, $q(a)\le 0$ and by the choice of $a$ it follows $\inf q( -A)\ge 0$, thus \eqref{eq:q-le-0} is verified.

        Next, let
        $$
            \widehat f(x) := \begin{cases}
                f(x),\quad x\in B(\bar x;\gamma),\\
                +\infty,\quad x\not\in B(\bar x;\gamma).
            \end{cases}
        $$

        It is clear that $\widehat f$ is proper, lower semicontinuous and bounded below by $f(\bar x) - \varepsilon$, see \eqref{eq:f-eps-bound}.

        Consider for $n\in\mathbb{N}$ the functions
        $$
            \widehat f + \psi_n.
        $$
        From Lemma~\ref{lem:p4} (applied to  $\widehat f + \psi_n$ and $\varepsilon=1/n$),  we get $x_n,\ y_n\in X$,  $p_n\in\partial \widehat f(x_n)$ and $q_n\in \partial \psi_n(y_n)$ such that
        \begin{equation}
            \label{eq:x-y-close}
            \|x_n-y_n\|<1/n,
        \end{equation}
        \begin{equation}
            \label{eq:f-psi-to-inf}
            \widehat f(x_n) + \psi_n(y_n) < \inf (\widehat f + \psi_n )(X)+ 1/n \le f(\bar x) + 1/n,
        \end{equation}
        and $\|p_n + q_n\|< 1/n$. Since $A$ is bounded, $  s:=\sup_{a\in A}\| a \|$ is finite. So, for any $a\in A$,
        \[
        p_n(a )>-q_n(a ) -\frac{1}{n}\|a \| > q_n( -a) - \frac{1}{n}  s,
        \]
        and then
        \[
       \inf p_n(A )\ge \inf q_n(   -A)-\frac{1}{n}  s \ge -\frac{1}{n} s.
        \]
 Hence, there is an $N\in\mathbb{N}$ such that
        $$
         \inf   p_n(A ) > -\varepsilon,\quad\forall n > N.
        $$

To conclude, we need to prove the following

        \smallskip
        \noindent
        \textsc{Claim.} There are arbitrarily large $n$'s such that $x_n\in B^\circ(\bar x;\gamma)$.

        \smallskip
        \noindent
        Assume the contrary: there is $N_1\in\mathbb{N}$ such that
       \begin{equation}
            \label{eq:y-to-boundary}
            \|x_n - \bar x\| = \gamma,\quad\forall n > N_1.
      \end{equation}
        From \eqref{eq:x-y-close} it immediately follows that
      $$
            \lim_{n\to\infty}\|y_n-\bar x\| = \gamma.
       $$
        From \eqref{eq:f-psi-to-inf} and \eqref{eq:f-eps-bound} we get
        $$
            \psi_n(y_n) \le f(\bar x) - f(x_n) +1/n \le \eps +1/n\le 2, \quad\forall n\in\mathbb{N}.
        $$
        From the definition of $\psi_n$ we then have that
        $$
            \lim_{n\to\infty} \dist(y_n,\{ \bar x\}+C(0;A ) ) = 0,
        $$
        and by \eqref{eq:x-y-close},
    $$
            \lim_{n\to\infty} \dist(x_n, \{ \bar x\}+C(0;A )) = 0.
        $$

        So, there are $a_n\in A$ and $t_n\ge 0$ such that
        \begin{equation}
            \label{eq:tn-an}
            \lim_{n\to\infty} \|x_n - \bar x - t_na_n \| = 0.
        \end{equation}
        From this, \eqref{eq:y-to-boundary} and  the triangle inequality it follows that $t_n\|a_n \| > \gamma/2$ for all $n$ large enough. So, for $\theta := \gamma / (2 s)$ and some $N_2 \in \mathbb{N}$:
        $$
            t_n > \theta > 0,\quad\forall n > N_2.
        $$
        From \eqref{eq:tn-an}  then follows that  $x_n\in \{ \bar x\}+C(0;A_\delta )$ for all $n$ large enough, and \eqref{eq:f-bound-bound} together with \eqref{eq:y-to-boundary} gives $f(x_n) > f(\bar x) + k\gamma/2$ for all $n$ large enough. Since $\psi_n \ge 0$,
        \[
        f(x_n)+\psi_n(y_n)>f(\bar x)+k\gamma/2
        \]
        for large $n$, which
        contradicts \eqref{eq:f-psi-to-inf}, and the claim is verified.

        \smallskip

        So,  for arbitrary large $n$'s by (P2) we have $p_n\in\partial f(x_n)$ and \eqref{eq:f-psi-to-inf} implies (because of $\psi_n\ge 0$) that $f(x_n) \le f(\bar x) + 1/n < f(\bar x) + \varepsilon$ for large $n$'s. Since, $f(x_n) > f(\bar x) - \varepsilon$, see \eqref{eq:f-eps-bound}, the proof is completed.
    \end{proof}

    \section{Proof of Theorem~\ref{thm:DualCLI}}
    \label{sec:proof}
We will again follow the route traced in  \cite{ACL}.

Fix $\varepsilon \in (0,1)$ and let $\delta_1 > 0$ be so small that $4\delta_1 < \varepsilon$ and
$$
    r + 2\delta_1 < \inf f(A_{\delta_1}).
$$
The latter is possible due to \eqref{eq:cled-cond}. Let us denote for short $r_1 := r + \delta_1$. Let $\delta\in(0,\delta_1)$ be so small that
$$
   \Delta := \max\{(r_1-f(a)) (1-\delta),(r_1-f(a)) (1+\delta)\} < r_1 - f(a) + \delta_1,
$$
and
$2\delta |r_1-f(a)|<\eps/4$.

Consider the Banach space $\tilde X := X\times\mathbb{R}$ equipped with the norm $\|(x,r)\| := \max\{\|x\|,|r|\}$, the point $\tilde a := (a,0)$ and the set $\tilde A := A\times \{1\}=(A,1)$. Note that with this norm chosen, $\tilde A_\delta =A_\delta\times[1-\delta,1+\delta]$.

Consider the function $\widetilde f: \tilde X \to \Ri$ defined as
$$
    \widetilde f(x,t) := f(x) -(r_1-f(a)) t.
$$
We have that $\displaystyle \inf \widetilde f(\tilde A_\delta) = \inf f(A_\delta) - \sup_{t\in [1-\delta,1+\delta]} (r_1-f(a))t =\inf f(A_\delta) - \Delta$. Since $\delta_1$ and $\delta$ were chosen so that $\inf f(A_\delta) \ge \inf f(A_{\delta_1}) > r_1 + \delta_1$, and $-\Delta > f(a) - r_1 - \delta_1$, we have that
$$
  \inf  \widetilde f(\tilde A_\delta) > f(a) = \widetilde f(\tilde a).
$$

Let us take $r'\in (f(a) , \inf \widetilde f(\tilde A_\delta))$ such that $r'<f(a)+\delta$.

From Theorem~\ref{thm:primal-cled}, applied for $\widetilde f$, $\tilde A_\delta$ and $r'$,  there exists $(\bar x, \bar t)\in [\tilde a, \tilde A_\delta]$ such that

\begin{equation}
\label{eq:est r'}
\widetilde f(\bar x,\bar t) \le r'
\end{equation}
 and
$$
   \widetilde f^-((\bar x,\bar t), \tilde A_\delta - \tilde a) \ge r'-f(a)>0.
$$
Now from Proposition~\ref{prop:bridge},  applied for $\widetilde f$, $\tilde A_\delta - \tilde a$ and $\delta >0$, there is $(\xi,s) \in B^\circ((\bar x, \bar t);\delta)$ such that $\tilde f(\xi,s) < \tilde f(\bar x,\bar t) + \delta$, and, recalling (P6), there is $p\in \partial f(\xi)$ such that
$$
   \inf  (p,f(a)-r_1)(\tilde A - \tilde a ) > - \delta.
$$
We will check that $\xi \in X$ and $p\in X^*$ satisfy the conclusions. First, the above immediately yields $\inf p(A)-p(a) > r_1 - f(a) - \delta > r - f(a)$ and we have \eqref{eq:cl-p-A}.

Next, by (\ref{eq:est r'}) we have $\widetilde f(\xi,s)< \widetilde f(\bar x,\bar t)+\delta\le r'+\delta<f(a)+2\delta < f(a)+\eps/2$ and it follows that
\[
f(\xi)=\widetilde f(\xi,s)+(r_1-f(a))s< f(a)+\eps/2+(r_1-f(a))s.
\]
Since $\bar t\in [0,1+\delta]$, we have that $s\in [-\delta,1+2\delta]$. If $r_1\ge f(a)$ then $(r_1-f(a))s\le r_1-f(a)+2\delta |r_1-f(a)|\le r-f(a)+\delta +2\delta |r_1-f(a)|< r-f(a) +\eps/2$, and if $r_1< f(a)$, then $(r_1-f(a))s\le \delta |r_1-f(a)|< \eps/2$.
Hence,
\[
f(\xi)< \max\{ f(a),r\} + \eps,
\]
and we have \eqref{eq:f-xi-local}.

Finally, $\xi\in[a,A_\delta]_\delta\subset [a,A]_\varepsilon$. \qed

\section{Appendix}
 \label{sec:appendix}

Here we give a proof of Bishop-Phelps Lemma \ref{lem:bp}, which relies solely on notions of the underlying primal space $X$. The proof is based on a simple geometric fact, see Lemma~\ref{prop:app}, inspired by the techniques used for verifying the Long orbit or Empty value (LOEV) principle, see \cite{IZ}. First we will prove the following  straightforward preparatory result.

\begin{lem}
\label{lem:app}
Let $X$ be Banach space and $A\subset X$ be a non-empty, closed, bounded and convex set, such that $0\notin A$. Then there exists   $c>0$ such that
for any finite set of points $\{x_k\}_{k=1}^n\subset C(0,A)$ it holds that
\begin{equation}
\label{eq:estimate}
\left\|\sum_{k=1}^n x_k\right\|\ge c\sum_{k=1}^n\|\ x_n\|.
\end{equation}
\end{lem}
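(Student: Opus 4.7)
The plan is to identify an explicit geometric constant. Set $\mu:=\inf_{a\in A}\|a\|$ and $s:=\sup_{a\in A}\|a\|$. Boundedness of $A$ gives $s<\infty$, and closedness of $A$ together with $0\notin A$ gives $\mu>0$ (a sequence in $A$ whose norms tend to zero would have a limit point in $A$ equal to $0$). I claim that $c:=\mu/s$ does the job, and the proof does not actually need anything beyond convexity of $A$ and these two estimates.

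The main step is to exploit convexity. Fix $\{x_k\}_{k=1}^n\subset C(0,A)$ and write each point as $x_k=t_ka_k$ with $t_k\ge0$ and $a_k\in A$ (choosing $t_k=0$ and any $a_k$ when $x_k=0$). If all $t_k=0$ both sides of \eqref{eq:estimate} are zero, so assume $T:=\sum_k t_k>0$. The element
$$
\bar a:=\frac{1}{T}\sum_{k=1}^n t_k a_k
$$
is a convex combination of points of $A$, hence $\bar a\in A$ by convexity, so $\|\bar a\|\ge\mu$. This yields the lower bound
$$
\Bigl\|\sum_{k=1}^n x_k\Bigr\|=\Bigl\|\sum_{k=1}^n t_k a_k\Bigr\|=T\,\|\bar a\|\ge \mu T.
$$

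On the other hand, by definition of $s$,
$$
\sum_{k=1}^n\|x_k\|=\sum_{k=1}^n t_k\|a_k\|\le sT.
$$
Dividing the two gives exactly \eqref{eq:estimate} with $c=\mu/s$.

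There is no serious obstacle here: the whole content of the argument is noticing that the combination $\sum t_k a_k$ can be rescaled into a convex combination inside $A$. The only place where care is needed is verifying $\mu>0$, which is precisely where the hypothesis that $A$ is closed (and not merely separated from $0$) is used; the rest is bookkeeping.
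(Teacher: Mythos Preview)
Your proof is correct and essentially identical to the paper's own argument: the paper sets $\delta=\inf_{a\in A}\|a\|$, $L=\sup_{a\in A}\|a\|$, $c=\delta/L$, rescales $\sum t_k a_k$ into a convex combination in $A$, and compares the two resulting bounds on $T=\sum t_k$ exactly as you do. The only cosmetic difference is notation; your extra sentence justifying $\mu>0$ via closedness is a small elaboration the paper omits.
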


 \begin{proof}

 Let $\delta:=\inf_{a\in A}\|a\|>0$ and $L:=\sup_{a\in A}\|a\|<\infty$. Set $c:=\delta/L$. Let $x_k=t_ka_k$ for some $t_k\ge0$ and $a_k\in A$, $k=1,\dots,n$. If $t_k=0$ for all $k=1,\dots, n$,  (\ref{eq:estimate}) is trivial. Else, set $t:=\sum_{k=1}^n t_k>0$ and consider $t'_k:=t_k/t\ge 0$. By convexity of $A$ we have that $ \sum_{k=1}^n t_k'a_k=a\in A$, and then
 \[
 \left\|\sum_{k=1}^n x_k\right\|=\left\|\sum_{k=1}^n t_ka_k\right\|=t\left\|\sum_{k=1}^n t'_ka_k\right\|=t\|a\|\ge t\delta .
 \]
 On the other hand,
 \[
\frac{\delta}{L}\sum_{k=1}^n \|x_k\|=\frac{\delta}{L}\sum_{k=1}^{n}t_k\|a_k\|\le \delta \sum_{k=1}^{n}t_k=\delta t.
 \]
By combining both, one gets \eqref{eq:estimate}.
 \end{proof}

\begin{lem}
\label{prop:app}
Let $X$ be a Banach space and $B\subset X$ be a non-empty, closed, bounded and convex set. Let $V\subset[0,B]$ be a  closed set such that $0\in V$. 

If $V\cap B=\varnothing$, then 
\[
\exists x_0\in V \text{ such that } \forall  y\in(\{x_0\}+C(0;B))\setminus\{x_0\} \text{ it follows } y\notin V.
\]
\end{lem}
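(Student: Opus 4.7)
The plan is to introduce a partial order on $X$ via $y \preceq z \iff z - y \in C(0;B)$ and then construct a $\preceq$-maximal element of $V$ by iteration. Since $B$ is convex, $C(0;B)$ is a convex cone, so $\preceq$ is transitive; reflexivity is immediate from $0 \in C(0;B)$. For antisymmetry, the hypotheses $0 \in V$ and $V \cap B = \varnothing$ give $0 \notin B$, so Lemma~\ref{lem:app} yields $c > 0$; applied to $\{u,-u\} \subset C(0;B)$ it produces $0 = \|u + (-u)\| \geq 2c\|u\|$, hence $u = 0$.

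Setting $x^{(0)} := 0 \in V$, I inductively define $\rho_n := \sup\{\|y - x^{(n)}\| : y \in V,\ y \succeq x^{(n)}\}$. If $\rho_n = 0$, the only such $y$ is $x^{(n)}$ itself, and I take $x_0 := x^{(n)}$ and stop. Otherwise I pick $x^{(n+1)} \in V$ with $x^{(n+1)} \succeq x^{(n)}$ and $\|x^{(n+1)} - x^{(n)}\| \geq \rho_n / 2$. If the iteration never terminates, Lemma~\ref{lem:app} applied to the increments $u_k := x^{(k+1)} - x^{(k)} \in C(0;B)$ gives $\sum_{k=0}^{n-1} \|u_k\| \leq c^{-1}\|x^{(n)}\| \leq c^{-1} L$ where $L := \sup_{b \in B}\|b\|$; thus the norm-series converges, $(x^{(n)})$ is Cauchy, and by closedness of $V$ it converges to some $x_0 \in V$.

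To conclude, I check that $x_0$ is $\preceq$-maximal in $V$. First I verify $x^{(n)} \preceq x_0$: the partial sum $x^{(m)} - x^{(n)}$ lies in the convex cone $C(0;B)$, so equals $t_m b_m$ with $t_m \geq 0$, $b_m \in B$, and the bound $\delta := \inf_{b \in B}\|b\| > 0$ together with boundedness of $\|x^{(m)} - x^{(n)}\|$ keeps $\{t_m\}$ bounded; passing to a subsequence $t_{m_k} \to t$, if $t = 0$ then $x_0 = x^{(n)}$, while if $t > 0$ then $b_{m_k} = t_{m_k}^{-1}(x^{(m_k)} - x^{(n)}) \to t^{-1}(x_0 - x^{(n)}) \in B$ by closedness of $B$, so $x_0 - x^{(n)} \in C(0;B)$. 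Then for any $y \in V$ with $y \succeq x_0$, transitivity gives $y \succeq x^{(n)}$ for every $n$, hence $\|y - x^{(n)}\| \leq \rho_n \leq 2\|x^{(n+1)} - x^{(n)}\| \to 0$, forcing $y = x_0$. The main obstacle is exactly this final limiting step: $C(0;B)$ need not be closed in infinite dimension, so the proof essentially rests on the geometric input that $B$ is closed, bounded, and bounded away from $0$, which together permit the extraction of a convergent scalar subsequence and keep the limit inside the cone.
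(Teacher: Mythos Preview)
Your proof is correct and follows the same iterative ``climb the cone-order to a maximal point'' scheme as the paper, using Lemma~\ref{lem:app} to force the increments to be summable and hence the sequence to be Cauchy.

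The one genuine technical difference lies in how you pass to the limit. The paper does not attempt to show that $x_0-x^{(n)}\in C(0;B)$; instead it replaces $B$ by the enlargement $\overline{B_\varepsilon}$ throughout the iteration, so that any hypothetical $\bar y=\bar x+\bar t\bar b\in\{\bar x\}+C(0;B)$ automatically lies in $\{x_n\}+C(0;B_\varepsilon)$ for large $n$, the $\varepsilon$-slack absorbing the perturbation $\bar x-x_n$. You take the complementary route: working with $C(0;B)$ itself, you exploit boundedness of the scalar factors $t_m$ (via $\inf_{b\in B}\|b\|>0$) and closedness of $B$ to extract a convergent subsequence and conclude $x_0-x^{(n)}\in C(0;B)$, then finish by transitivity. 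This is exactly the argument the paper uses at the start of Section~\ref{sec:primal} to show $[a,A]$ is closed, so your approach is arguably more internally consistent and avoids introducing the auxiliary enlarged set; the paper's enlargement trick, on the other hand, sidesteps the subsequence extraction entirely.
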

\begin{proof}
If is enough to prove that if
\begin{equation}
\label{eq:cond}
\forall x\in V\setminus B, \quad \exists y\in\{x\}+C(0;B) \text{ such that } y\in V\setminus\{x\},
\end{equation}
then $V\cap B\neq\varnothing$.

Let us assume the contrary, i.e. \eqref{eq:cond} holds but $V\cap B=\varnothing$. This implies $0\notin B$ so there exists $\e>0$ such that $0\notin \ol{B_{\e}}$.

 Starting with  $x_0=0$ we construct by iteration a sequence of points $\{x_n\}\in V$. Therefore $x_n\in V\setminus B$ and from ($\ref{eq:cond}$) there exists $y\neq x_n$ such that $y\in V\cap(\{x_n\}+C(0;B))$.  Since $C(0;B)\subset C(0;B_{\e})$, we have
\begin{equation}
\label{eq:mu_n}
\nu_n:=\sup\{\|y-x_n\|:y\in V\cap(\{x_n\}+C(0;B_{\e}))\}>0.
\end{equation}
We take
\begin{equation}
\label{eq:x_n def}
x_{n+1}\in V\cap(\{x_n\}+C(0;B_{\e})) \text{ such that } \|x_{n+1}-x_n\|>\nu_n/2.
\end{equation}
Applying Lemma \ref{lem:app} for the set $\ol{B_{\e}}$ and the points $\{x_{k+1}-x_k\}_{k=0}^{n-1}\subset C(0;B_\eps)$, for a finite constant $c>0$ we get  that
\[
\|x_n\|=\left\|\sum_{k=0}^{n-1}(x_{k+1}-x_k)\right\|\ge c\sum_{k=0}^{n-1}\|x_{k+1}-x_k\|.
\]
Since $V$ is bounded and $x_n\in V $ for all $n$, the series $\sum_{k=0}^{n-1}\|x_{k+1}-x_k\|$  converges. Therefore $(x_n)_{n=0}^{\infty}$ is a Cauchy sequence and by the closedness of $V$ we obtain that 
\begin{equation}
\label{eq:bar x}
\lim_{n\to\infty}x_n=:\bar{x}\in V.
\end{equation}

Now ($\ref{eq:x_n def}$) implies that $\nu_n\to0$.

On the other hand,  from ($\ref{eq:cond}$) it follows the existence of $\bar{y}\in V$, $\bar{y}\neq\bar{x}$ such that $\bar{y}=\bar{x}+\bar{t}\bar{b}$ for some $\bar{t}>0$ and $\bar{b}\in B$. For every $n$ large enough (\ref{eq:bar x}) yields $\|\bar{y}-x_n-\bar{t}\bar{b}\|<\e\bar{t}$, which means that $(\bar{y}-x_n)/\bar t\in B_{\e}$ or $\bar{y}\in V\cap(\{x_n\}+C(0;B_{\e}))$. Then from ($\ref{eq:mu_n}$),  $\nu_n\ge\|\bar{y}-x_n\|>\|\bar{y}-\bar{x}\|/2>0$ for all $n$ large enough. The latter contradicts $\nu_n\to 0$. The proof is then completed.
 \end{proof}

{\textbf{Proof of Lemma \ref{lem:bp}}}. Put $M':=M-a$ and $B':=A-a$, and observe that $0\notin B'$ and $0\in M'$. Set $V:=M'\cap C(0;B')$, thus $0\in V$. Furthermore, $V+a=M\cap(\{a\}+C(0;A-a))\subset M$ and thus $V$ is bounded.

Since $0\notin B'$,   there is $t>0$ such that for $B:=tB'$ one has  $V\cap B=\varnothing$ and $V\subset[0,B]\subset C(0;B')$, thus $V=M'\cap [0,B]$. The latter is a closed set as it was explained at the beginning of Section \ref{sec:primal}. Therefore,  the conditions of Lemma~\ref{prop:app} are fulfilled for the sets $B$ and $V$. By Lemma~\ref{prop:app} one gets a $x_0\in V$ such that $\forall y\in\{x_0\}+C(0;B)=\{x_0\}+C(0;B'), \ y\neq x_0$ it holds that $y\notin V$. In the end, we obtain a point $\bar{x}:=x_0+a\in M\cap(\{a\}+C(0;A-a))$ such that $M\cap(\{\bar{x}\}+C(0;A-a))=\{\bar{x}\}$. The proof is completed. \qed

\end{document}